\documentclass[12pt]{amsart}

\usepackage{amsmath,amsfonts,amssymb,mathrsfs}

\newcommand*{\dgr}[1]{ {\mathrm d}^\circ_{#1} }
\newcommand*{\mi}{{\mathrm i}}
\newcommand*{\e}{{\mathrm e}}
\newcommand*{\dif}{\mathrm{d}}
\newcommand*{\hardy}[1]{{\mathsf H}_{#1}^2}

\newcommand*{\spa}{\hspace*{2em}}
\newcommand*{\proj}[1]{\mathrm{proj}\left({#1}\right)}

\newcommand*{\cnj}[1]{\overline{#1}}

\newtheorem{theorem}{Theorem}
\newtheorem{lemma}{Lemma}
\newtheorem{corollary}{Corollary}
\theoremstyle{definition}
\newtheorem*{remark}{Remark}

\begin{document}

\title{Blaschke products and unwinding in higher dimensions}

\author[R.~Coifman]{Ronald~R.~Coifman} \address{
Department of Mathematics, Program in Applied Mathematics, Yale University, New
Haven, CT 06510, USA} \email{coifman-ronald@yale.edu}
\author[J.~Peyri\`ere]{Jacques~Peyri\`ere} \address{
Institut de Mathématiques d’Orsay, CNRS, Université Paris-Saclay, 91405 Orsay, France}
\email{jacques.peyriere@universite-paris-saclay.fr}
\thanks{The second author gratefully acknowledges the hospitality of
  Yale University where most of this work was done}

\keywords{Blaschke product, inner function, polydisk,
  Malmquist-Takenaka, unwinding}
\subjclass{42B30,32A35,42A50,46J15,32A40}

\maketitle

\begin{abstract} We give a necessary and sufficient condition for the
  convergence of an infinite product of rational inner functions on
  the polydisk, and explore generalization to the polydisk of
  Malmquist-Takenaka bases and various versions of unwinding.
\end{abstract}

\section{Introduction}

Our goal is to extend to the polydisk various approximation theorems related to Blaschke products and  Malmquist Takanaka unwinding expansions.  We start with a few facts that we are going to generalize 
to several dimensions,  starting with observation of Rudin  that 
any rational inner function in the unit disk (i.,e., analytic and of
absolute value~1 on the boundary) is a finite Blaschke product
\begin{equation}\label{blaschke1d}
  z^k\prod_{1\le j< n}\frac{\cnj{a_j}}{|a_j|}
  \frac{z-a_j}{1-\cnj{a_j}z},
\end{equation}
where $k\ge 0$ and $|a_j|< 1$. If $\sum (1-|a_j|)< \infty$ the
infinite product
$\prod_{j\ge 1}\frac{\cnj{a_j}}{|a_j|} \frac{z-a_j}{1-\cnj{a_j}z}$
converges, uniformly on compact subsets of the unit disk, towards an
inner function, otherwise this product diverges to 0 on
$\mathbb D$~\cite{hoffman}.

More recently \cite{ coifman1,coifman2,mi,qian} obtained several adapted expansions of functions in the Hardy space
$\hardy{}(\mathbb D)$ of the form
\begin{equation}\label{unwinding}
  \sum_{n\ge 0} c_n\prod_{1\le j< n} B_j,
\end{equation}
where the $c_n$ are complex numbers and the $B_j$ are Blaschke products,
have been applied.

\section{Notation}
\hspace*{1em} \bigskip

\begin{tabular}{l|l}
  Expression & meaning\\
  \hline
  $d$ & a positive integer\\
  $\mathbb D$& the unit disk: ${\mathbb D} = \{z\in {\mathbb C}\ :\ |z|<1\}$\\
  $\partial {\mathbb D}$ & the boundary of $\mathbb D$ identified with the torus ${\mathbb T}= {\mathbb R}/2\pi{\mathbb Z}$\\
  $\partial^* {\mathbb D}^d$ & \text{the distinguished
                               boundary of } ${\mathbb D}^d$, identified\\
             &\hspace{2em} with ${\mathbb T}^d = \left({\mathbb R}/2\pi{\mathbb Z}\right)^d$\\
  $\hardy{d}$&  The Hardy space $\hardy{}({\mathbb D}^d)$\\
  $\e^{\mi\theta}$ & $\e^{\mi\theta_1},\e^{\mi\theta_2},\dots,\e^{\mi\theta_d}$\\
  $\dif\theta$ & $\dif\theta_1\dif\theta_2\cdots\dif\theta_d$\\
  $\langle f,g\rangle$ & $\displaystyle (2\pi)^{-d}\int_{\partial^*{\mathbb D}^d} f(\e^{\mi\theta}) \overline{g(\e^{\mi\theta})}\dif\theta$\\
  $\|f\|$ & $\sqrt{\langle f,f\rangle}$\\
  $m\succeq n$ & $m_j\ge n_ j \text{ for } 1\le j\le d$,\\
             &\hspace*{1em}if $m=(m_1,m_2,\dots,m_d)$ and $n=(n_1,n_2,\dots,n_d)$,\\
  $z$ & $z_1,z_2,\dots, z_d$\\
  $z^m$ & $z_1^{m_1}z_2^{m_2}\cdots z_d^{m_d}$, with $m=(m_1,m_2,\dots,m_d)$\\
  $\dgr{i} P$ & the degree of $P\in {\mathbb C}[z]^d$ with respect to the $i$-th variable,\\
  $\dgr{} P$ & $\bigl(\dgr{1} P,\dgr{2} P,\dots,\dgr{d} P \bigr)$,\\
  $\overline{P}$ & the polynomial obtained by replacing in $P$ each\\
             &\hspace*{2em} coefficient by its complex conjugate\\
  $P^*(z)$ & $z^{\dgr{} P}\overline{P}(z_1^{-1},z_2^{-1},\dots,z_d^{-1})$\\
  $\nu(P)$ & $\left\|P^{-1}\right\|^{-1}$\\
\end{tabular}\bigskip

The only polynomials~$P$ we consider, are non constant, have no zeroes
in ${\mathbb D}^d\cup \partial^*{\mathbb D}^d$, and have no common
factor with~$P^*$. Let ${\mathscr P}_d$ stand for this set of
polynomials. In these conditions  $P^*/P$ is inner, which means that it is an analytic function on the
polydisk and has modulus~1 at almost every points of the distinguished
boundary $\partial^*{\mathbb D}^d$. Indeed it is known that any
rational inner function is of the form $az^mP^*P^{-1}$
\cite{rudin1,rudin2,knese}.
\medskip

For $P\in {\mathscr P}_d$ it will be convenient to set
$\alpha(P) = P^*(0)/P(0)$ and
$$\beta(P) = \begin{cases} \cnj{\alpha(P)}/|\alpha(P)& \text{ if } \alpha(P)\ne 0,\\ 1& \text{ otherwise.}\end{cases}
$$

\begin{remark}
  When $d=1$, since the polynomials $P$ and $P^*$ have no common
  factors $P$ has the following form, up to a multiplicative constant,
  $P(z) = \prod_{j=1}^k (1-\overline{a_j}z)$, with $|a_j|< 1$. Then
  $$ \frac{P^*(z)}{P(z)} = \prod_{1\le j\le k} \frac{a_j-z}{1-\overline{a_j}z},
$$
which is a usual finite Blaschke product. So, from now on we call
$P^*/P$ a Blaschke product --- a d-Blaschke product if mentionning the
dimension is needed.
\end{remark}

\section{Blaschke products}

If $P\in {\mathscr P}_d$, the function
$P^*(w,w,\dots,w)/P(w,w,\dots,w)$ is an inner function on ${\mathbb D}$
whose value at~0 is $\alpha(P)$. This implies~$|\alpha(P)|< 1$.

\begin{theorem}\label{convergence} An infinite product
  $\displaystyle \prod_{n\ge 1} \beta(P_n) \frac{P_n^*}{P_n}$
  converges (uniformly on compact subsets of ${\mathbb D}^d$) if and
  only if
  $\displaystyle \sum_{n\ge 1} \Bigl(1-\bigl|\alpha(P_n)\bigr|\Bigr)<
  +\infty.$
\end{theorem}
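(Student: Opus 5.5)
Set $f_n=\beta(P_n)P_n^*/P_n$. Each $f_n$ is inner on ${\mathbb D}^d$, bounded by $1$ there, and satisfies $f_n(0)=\beta(P_n)\alpha(P_n)=|\alpha(P_n)|$. So the partial products $F_N=\prod_{n\le N}f_n$ are bounded by $1$ and have $F_N(0)=\prod_{n\le N}|\alpha(P_n)|\ge 0$. The plan is to reduce to the one-variable theory through a Schwarz--Pick/Harnack estimate for holomorphic self-maps of the polydisk. Throughout, "converges" is understood as in the classical case: the limit is not identically $0$.

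\emph{Necessity.} Suppose the product converges uniformly on compact sets to some $F\not\equiv0$. The sequence $F_N(0)$ is nonincreasing, with limit $F(0)=\prod|\alpha(P_n)|$. If $F(0)\neq0$, this infinite product of numbers in $[0,1)$ is positive, which forces $\sum(1-|\alpha(P_n)|)<\infty$. The remaining case is $F(0)=0$ with $F\not\equiv0$. Here I would make the argument uniform at points other than $0$. The key pointwise estimate is the one-variable Schwarz--Pick inequality applied on the complex line through $0$ and $z$: for a holomorphic $g:{\mathbb D}^d\to\cnj{\mathbb D}$ and $\|z\|_\infty\le r<1$,
\[
1-|g(z)|\le \frac{1+r}{1-r}\bigl(1-|g(0)|\bigr).
\]
This is Harnack's inequality for the positive harmonic function $-\log|g|$, or Schwarz--Pick on the slice $w\mapsto g(wz/\|z\|_\infty)$. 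Symmetrically, $1-|g(0)|\le\frac{1+r}{1-r}(1-|g(z)|)$. Pick $z_0$ with $F(z_0)\ne0$. Then $\prod|f_n(z_0)|>0$, so $\sum(1-|f_n(z_0)|)<\infty$, and the symmetric inequality gives $\sum(1-|\alpha(P_n)|)<\infty$. (This makes the case $F(0)=0$ impossible.)

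\emph{Sufficiency.} Assume $\sum(1-|\alpha(P_n)|)<\infty$. The Harnack estimate gives $\sum_n\sup_{\|z\|_\infty\le r}(1-|f_n(z)|)<\infty$ for each $r<1$. This controls moduli but not arguments, and handling the phases is the main obstacle. Here the normalization by $\beta(P_n)$ is essential. Since $f_n(0)=|\alpha(P_n)|>0$, I apply the Schwarz--Pick inequality on slices in its distance form: $\rho\bigl(f_n(z),f_n(0)\bigr)\le\rho(\|z\|_\infty,0)=r$ for the pseudo-hyperbolic metric $\rho$. When $a=f_n(0)$ is close to $1$, the pseudo-hyperbolic disk of radius $r$ about $a$ is a Euclidean disk with center close to $a$ and radius $O\bigl((1-a)/(1-r)\bigr)$. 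This yields
\[
|1-f_n(z)|\le C_r\bigl(1-|\alpha(P_n)|\bigr),\qquad \|z\|_\infty\le r .
\]
From this bound, the standard theory of infinite products of holomorphic functions gives uniform convergence of $\prod f_n$ on compact subsets. The limit is holomorphic, has modulus at most $1$, and its value at $0$ is $\prod|\alpha(P_n)|>0$, so it is nonzero.

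Finally, I would note that the limit is inner. Since $\|F\|_\infty\le1$, it suffices to show $\|F\|_{\hardy{d}}^2=1$. A clean way is to compare with $\|F_N\|=1$ and $\langle F,F_N\rangle=\langle F_{>N},1\rangle=F_{>N}(0)\to1$, where $F_{>N}=\prod_{n>N}f_n$. This is not required by the statement, but it justifies calling the limit a Blaschke product.
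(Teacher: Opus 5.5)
Your proof is correct, and it takes a genuinely different route from the paper's in both directions.

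\textbf{Sufficiency.} The paper works on the distinguished boundary. Since the partial products $B_n$ are unimodular on ${\mathbb T}^d$ and $B_n/B_m$ is analytic, one gets $\|B_m-B_n\|^2=2\bigl(1-\prod_{m\le j<n}|\alpha(P_j)|\bigr)$. So $B_n$ is Cauchy in $\hardy{d}$, which gives locally uniform convergence and shows at the same stroke that the limit is inner. You instead use an interior Schwarz--Pick estimate $|1-f_n(z)|\le\frac{1+r}{1-r}\bigl(1-|\alpha(P_n)|\bigr)$ on $\|z\|_\infty\le r$, followed by the M-test for products. This estimate holds for every $a=f_n(0)\in[0,1)$, not only for $a$ near $1$, because the pseudo-hyperbolic disk of radius $r$ about $a$ lies in $|1-w|\le (1+r)(1-a)/(1-ar)$. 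Your argument uses only that the factors are Schur-class with $f_n(0)\ge 0$, and it gives an explicit local rate. The price is that innerness of the limit needs the separate $L^2$ computation you append, which is essentially the paper's argument.

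\textbf{Necessity.} The paper argues by contraposition: it restricts to the slices $(z,uw,w)$, invokes the one-variable Blaschke theorem and the identity principle, and inducts on $d$. Your reverse bound $1-|\alpha(P_n)|\le\frac{1+r}{1-r}\bigl(1-|f_n(z_0)|\bigr)$, taken at a point where the limit does not vanish, is dimension-free and avoids the induction entirely. The same bound also shows $|F_N(z)|\to 0$ at every $z$ when $\sum(1-|\alpha(P_n)|)=\infty$. Combined with normality, this recovers the paper's stronger statement that the product then diverges to $0$.

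Two slips need fixing.

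First, for $d\ge 2$ one can have $\alpha(P)=0$. For example, $P=3-z_1-z_2$ lies in ${\mathscr P}_2$ and $P^*=3z_1z_2-z_1-z_2$ vanishes at $0$. Consequently:
\begin{itemize}
\item in the sufficiency part, $\prod|\alpha(P_n)|$ need not be positive, so that is not a valid reason for $F\not\equiv 0$;
\item your parenthetical claim that $F(0)=0$ is impossible is false.
\end{itemize}
Since $\sum(1-|\alpha(P_n)|)<\infty$, only finitely many $\alpha(P_n)$ vanish. Either set these aside as the paper does (the tail product is positive at $0$, and multiplying by finitely many nonzero holomorphic functions on the connected domain ${\mathbb D}^d$ keeps it nonzero), or use the standard fact that the limit of a product with $\sum\sup_K|1-f_n|<\infty$ vanishes only where some factor vanishes.

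Second, when $g$ has zeros (as $f_n$ may), $-\log|g|$ is only superharmonic, and the Harnack upper bound fails for it. In any case, Harnack would bound $-\log|g|$ rather than $1-|g|$. The justification to keep is Schwarz--Pick on the slice $w\mapsto g(wz/\|z\|_\infty)$, which gives both of your inequalities with the constant $\frac{1+r}{1-r}$.
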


\proof
Suppose first that
$\displaystyle \sum_{n\ge 1} \Bigl(1-|a(P_n)|\Bigr)< +\infty$. Then
there are finitely many~$n$ such that $\alpha(P_n)=0$. So we may
suppose $\alpha(P_n)\ne 0$ for all~$n$. Set
$\displaystyle B_n = \prod_{1\le j< n} \beta(P_n)\frac{P_n^*}{P_n}$. Take
$m<n$, then
\begin{eqnarray*}
  \int_{{\mathbb T}^d}|B_m-B_n|^2 \dif\theta &=& 2\Re \int_{{\mathbb T}^d} \left( 1-\overline{B_m}B_n\right)\,\dif\theta\\
                                                            &=& 2\Re \int_{{\mathbb T}^d} \left( 1-\frac{B_n}{B_m}\right) \,\dif\theta,
\end{eqnarray*}
(do not forget that $|B_m(\e^{\mi\theta)}| = 1$).
Since
$\displaystyle \frac{B_n}{B_m} = \prod_{m\le j< n}
\beta(P_j)\frac{P_j^*}{P_j}$ is analytic, its mean value on
${\mathbb T}^d$ is its value at~$0$, that is
$\prod_{m\le j< n}|a(P_j)|$. It results that $B_n$ has a limit in
$L^2\bigl(\partial^* {\mathbb D}^d\bigr)$ and converges, uniformly on
any compact subset of the polydisk, to a function which is not~0 since
its value at~$0$ is $\prod_{n\ge 1} \alpha(P_n)$.  \medskip

Now we suppose
$\displaystyle\sum_{j\ge 0} \bigl(1-|\alpha(P_j)|\bigr) = +\infty$.
The $B_n$ form a normal family, We wish to show that~0 is the only
limit point of the sequence $B_n$. Such a limit point~$f$ is analytic
in ${\mathbb D}^d$.

When $d=1$ we know that this infinite product diverges to~0.

Let us first consider the case~$d=2$.

Let $w\in {\mathbb D}$ and $u\in \partial{\mathbb D}$.
Set
$$
g_n(w) = \beta(P_n)\frac{P_n^*(uw,w)} {P_n(uw,w)}.
$$
If $\alpha(P_j)=0$ an infinite number of times the corresponding $g_n$
are Blaschke products vanishing at~0. So the infinite product
$\prod g_n$ diverges. If $\alpha(P_j)=0$ a finite number of times, we
may only consider the $n$ for which $\alpha(P_n)\ne 0$. Then $g_n$ is
a 1-Blaschke product whose value at~0 is $\alpha(P_n)$. So again the
product $\prod g_n$ diverges. This means that for any limit point~$f$
of the sequence~$B_n$ we have $f(uw,w)=0$.

In other terms, $f(z_1,w)=0$ for all~$z_1$ such that $|z_1|=|w|$,
which implies that this again holds if $|z_1|\le |w|$. Therefore
$f=0$.

We are going to prove by induction on~$d$ that such a product
diverges.  Suppose that this is true in $d$ variables. Take
$z= (z_1,\dots,z_{d-1}) \in {\mathbb D}^{d-1}$,
$u\in \partial{\mathbb D}$, and $w\in {\mathbb D}$, and set
$$g_n(z,w) = \beta(P_n)\frac{P_n^*(z,uw,w)}{P_n(z,uw,w)}.$$
Then $g_n$ is a d-Blaschke product whose value at~0 is
$\alpha(P_n)$. So again the product $\prod g_n$ diverges. This means
that for any limit point~$f$ of the sequence~$B_n$ we have
$f(z,uw,w)=0$ for all $z= (z_1,\dots,z_{d-1}) \in {\mathbb D}^{d-1}$,
$u\in \partial{\mathbb D}$, and $w\in {\mathbb D}$. As previously,
this shows that~$f=0$.

\begin{theorem}\label{complet}
  In case $\sum (1-|\alpha(P_n)| = \infty$ we have
  $\displaystyle \bigcap B_n\hardy{d} = \{0\}$.
\end{theorem}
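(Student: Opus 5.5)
Here $B_n$ is the partial product $\prod_{1\le j<n}\beta(P_j)P_j^*/P_j$, as in the proof of Theorem~\ref{convergence}. The plan is to show that any $f$ lying in every $B_n\hardy{d}$ has all its Taylor coefficients equal to zero. We use the adjoint structure of multiplication by an inner function. Since $|B_n|=1$ a.e.\ on $\partial^*{\mathbb D}^d$, multiplication by $B_n$ is an isometry of $\hardy{d}$. So $f=B_nh_n$ with $h_n\in\hardy{d}$ and $\|h_n\|=\|f\|$. Moreover $h_n=\cnj{B_n}f$ on the distinguished boundary.

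Fix a multi-index $m$. The $m$-th Taylor coefficient of $f$ is
\[
\hat f(m)=\langle f, z^m\rangle=\langle h_n, \cnj{B_n}z^m\rangle .
\]
Here $\cnj{B_n}$ is evaluated on the boundary. Since $h_n\in\hardy{d}$, we may replace $\cnj{B_n}z^m$ by its orthogonal projection onto $\hardy{d}$. That projection is $z^m\cdot\cnj{B_n}$ projected, which involves only the Fourier coefficients of $B_n$ of index $\preceq m$. Concretely,
\[
\hat f(m)=\sum_{k\preceq m}\cnj{\widehat{B_n}(m-k)}\,\hat h_n(k)\cdot(\text{suitable indexing}).
\]
In other words, $\hat f(m)$ is a finite bilinear combination of the Taylor coefficients of $B_n$ of order $\preceq m$ and those of $h_n$. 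The latter are bounded by $\|f\|$. This gives
\[
|\hat f(m)|\le \|f\|\Bigl(\sum_{k\preceq m}|\widehat{B_n}(k)|^2\Bigr)^{1/2}.
\]

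It remains to show that each fixed Taylor coefficient $\widehat{B_n}(k)$ tends to $0$ as $n\to\infty$. This is exactly where Theorem~\ref{convergence} enters. In the divergent case its proof shows that every limit point of the normal family $(B_n)$ is $0$, so $B_n\to0$ uniformly on compact subsets of ${\mathbb D}^d$. By Cauchy's estimates on a small polydisk around $0$, every Taylor coefficient of $B_n$ then tends to $0$. Letting $n\to\infty$ in the bound above gives $\hat f(m)=0$ for all $m$, hence $f=0$.

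The main obstacle I expect is the care needed with the projection identity. One must check that $\langle h,\cnj{B}z^m\rangle$ really reduces to finitely many coefficients of $B$ when $h\in\hardy{d}$. This follows from expanding $\langle h, \cnj{B}z^m\rangle=\langle hB, z^m\rangle=\sum_{k\preceq m}\hat h(k)\widehat B(m-k)$, a Cauchy product that is finite because all indices are $\succeq 0$. Alternatively, one can use $\hat f(m)=\widehat{B_nh_n}(m)$ directly, which gives the same finite sum. Either way the argument uses only the normal-convergence conclusion of Theorem~\ref{convergence}, and no finer structure of the $P_n$ is needed.
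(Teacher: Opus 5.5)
Your proof is correct, and it takes a different route from the paper. The paper proves Theorem~\ref{complet} by a separate induction on dimension. It restricts $f\in\bigcap B_n\hardy{d+1}$ to the slices $(z,uw,w)$ and asserts that the restriction lies in $\bigcap B_n^\sharp\hardy{d}$. The induction bottoms out in the one-variable fact that a nonzero $\hardy{1}$ function cannot vanish on a non-Blaschke sequence, and then $f=0$ on $\{|v|=|w|\}$ plus the maximum principle finishes.

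You instead obtain the result as a corollary of the divergent half of Theorem~\ref{convergence}. Since $B_n\to0$ locally uniformly, every fixed coefficient $\widehat{B_n}(j)$ tends to $0$. The uniform bound $\|h_n\|=\|f\|$ and the finite Cauchy product $\hat f(m)=\sum_{k\preceq m}\hat h_n(k)\widehat{B_n}(m-k)$ then force $\hat f(m)=0$. The conjugate on $\widehat{B_n}$ in your first coefficient formula should not be there; you state it correctly at the end, and it does not affect the bound.

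What your route buys:
\begin{itemize}
\item It is dimension-free.
\item It only ever restricts the bounded functions $B_n$ to slices (inside the proof of Theorem~\ref{convergence}), never the $\hardy{}$ functions $f$ and $h_n$.
\end{itemize}
The second point matters. The paper's claim that $g(z,w)=f(z,uw,w)$ lies in $\hardy{d}$ is not justified as written. The diagonal restriction of an $\hardy{2}$ function is in general only in a Bergman-type space: coefficients $\cnj{u}^{\,j}c_n/\sqrt{n+1}$ on $j+k=n$ give norm $\left(\sum|c_n|^2\right)^{1/2}$ but restrict to $\sum_n c_n\sqrt{n+1}\,w^n$. In such spaces the Blaschke uniqueness argument is unavailable. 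Your uniform norm control on $h_n$ is exactly what avoids this.

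The paper's slicing approach, in exchange, exhibits the zero-set mechanism directly.

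Your idea can be put even more briefly by point evaluation. Since $|h_n(z)|\le\|h_n\|\prod_j(1-|z_j|^2)^{-1/2}$, you get $|f(z)|\le|B_n(z)|\,\|f\|\prod_j(1-|z_j|^2)^{-1/2}\to0$.
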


\proof This is obvious when $d=1$ because if $f\in \bigcap B_n\hardy1$
the function $f$ has zeroes which accumulate so quickly to the
boundary that it has to be~$0$. As previously we make a recursion
on~$d$.

Suppose that this is true for~$d$ and consider such a product in
$(d+1)$ variables.  Let $z\in {\mathbb D}^{d-1}$,
$u\in \partial{\mathbb D}$, and $w\in {\mathbb D}$. Set
$B_n^\sharp(z,w)= B_n(z,uw,w)$. The $d$-Blaschke product $B_n^\sharp$
has the same value at~0 as $B_n$. Let $f\in \bigcap
B_n\hardy{d+1}$. Then the function $g(z,w) = f(z,uw,w)$ belongs to
$\bigcap B_n^\sharp\hardy{d}$. It results from the recursion
hypothesis that $g=0$. So we have $f(z,v,w)=0$ when $|v|=|w|$, which
implies that this is still true when $|v|\le |w|$. Finally $f=0$.

\section{Malmquist-Takenaka orthogonal systems}

\subsection{Malmquist-Takenaka orthogonal systems}

\begin{lemma}\label{MK1} Let $P\in {\mathscr P}_d$.  Then
  $f/P$ is orthogonal to $\frac{P^*}{P}\hardy{d}$ if and only if $\widehat{f}(j) = 0$ for all $j$ such that $j\succeq \dgr{} P$.
\end{lemma}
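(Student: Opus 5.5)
The plan is to compute the inner product directly on the distinguished boundary, where $\cnj{P^*}$ can be rewritten in terms of $P$. The lemma is implicitly about $f\in\hardy{d}$, or at least $f/P\in\hardy{d}$; since $P$ has no zeros on the closed polydisk, $1/P$ is bounded and analytic, so $f/P\in\hardy{d}$ exactly when $f\in\hardy{d}$. Write $m=\dgr{} P$.

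The first step is an identity on ${\mathbb T}^d$. There $\cnj{z_i}=z_i^{-1}$, so the definition of $P^*$ gives
\[
\cnj{P^*(\e^{\mi\theta})}=\e^{-\mi m\cdot\theta}P(\e^{\mi\theta}).
\]

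The second step computes the inner product. Take any $g\in\hardy{d}$ and use the identity above:
\[
\Bigl\langle \frac fP,\frac{P^*}{P}g\Bigr\rangle
=(2\pi)^{-d}\int_{{\mathbb T}^d}\frac{f}{P}\,\cnj{P^*}\,\frac{\cnj g}{\cnj P}\dif\theta
=(2\pi)^{-d}\int_{{\mathbb T}^d} f\,\e^{-\mi m\cdot\theta}\,\cnj{\Bigl(\frac{g}{P}\Bigr)}\dif\theta
=\langle f, z^m h\rangle,
\]
where $h=g/P$. Multiplication by $1/P$ and by $P$ are both bounded invertible maps of $\hardy{d}$, because $P$ and $1/P$ are bounded analytic functions on the closed polydisk. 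Hence $h$ ranges over all of $\hardy{d}$ as $g$ does. So $f/P\perp \frac{P^*}{P}\hardy{d}$ if and only if $f\perp z^m\hardy{d}$.

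The third step identifies $z^m\hardy{d}$. This space is the closed span of the monomials $z^j$ with $j\succeq m$. So $f$ is orthogonal to it if and only if $\langle f,z^j\rangle=\widehat f(j)=0$ for every $j\succeq m$, which is the claimed condition.

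The main point requiring care is the surjectivity of $g\mapsto g/P$ onto $\hardy{d}$. This is where the hypothesis that $P$ has no zeros on ${\mathbb D}^d\cup\partial^*{\mathbb D}^d$ is used. Since $P$ is a polynomial, it is then zero-free on a neighborhood of the closed polydisk, so $1/P$ is analytic on a larger polydisk and hence bounded. Note also that the hypothesis that $P$ and $P^*$ have no common factor is not needed for this lemma.
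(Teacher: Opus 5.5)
Your argument is correct and is the paper's own proof. On ${\mathbb T}^d$ one has $P^*=z^{\dgr{} P}\,\cnj{P}$, which turns the inner product into $\langle f, z^{\dgr{} P}g/P\rangle$. Since $g\mapsto g/P$ maps $\hardy{d}$ onto itself, the condition becomes exactly $f\perp z^{\dgr{} P}\hardy{d}$.

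One point you (like the paper) pass over quickly: for $d\ge 2$, ${\mathbb D}^d\cup\partial^*{\mathbb D}^d$ is not the closed polydisk, so the boundedness of $1/P$ does not follow just from $P$ being a polynomial. It is nevertheless true. Hurwitz's theorem applied to $P(r\zeta_J,\cdot)\to P(\zeta_J,\cdot)$ on ${\mathbb D}^I$ as $r\to1^-$ shows that a zero at $(\zeta_J,w_I)$ with $|\zeta_j|=1$ and $|w_i|<1$ would force $P(\zeta_J,\cdot)\equiv 0$. That would give a zero on ${\mathbb T}^d$, contradicting the hypothesis.
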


\proof Let $f,\,\varphi\in \hardy{d}$. We have
\begin{equation*}
  \left\langle \frac{P^* \varphi}{P},\frac{f}{P}\right\rangle =
  \left\langle \frac{z^{\dgr{} P} \varphi}{P},f\right\rangle
\end{equation*}
because when $|z|=1$ we have $P^*(z)/\overline{P(z)} = z^{\dgr{} P}$.
Since $\varphi/P$ is any element of~$\hardy{d}$, this scalar product is
null if and only if $f$ is orthogonal to $z^{\dgr P}\hardy{d}$, which
means that the Fourier coefficient $\widehat{f}(j+\dgr{} P) = 0$ for
all $j$ such that $j\succeq 0$ .

\begin{corollary}\label{MK2} Let $(P_n)_{n\ge 1}$ be a sequence of elements
  of ${\mathscr P}_d$. Then
  $\displaystyle \left(\frac{\nu(P_n)}{P_n}\prod_{1\le j<n}
    \frac{P_j^*}{P_J}\right)_{n\ge 1}$ is an orthonormal system in
  $\hardy{d}$.
\end{corollary}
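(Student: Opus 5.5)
Write $e_n = \frac{\nu(P_n)}{P_n}\prod_{1\le j<n}\frac{P_j^*}{P_j}$. Each $e_n$ lies in $\hardy{d}$, since $P_n$ and the $P_j$ have no zeroes on the closed polydisk $\mathbb D^d\cup\partial^*\mathbb D^d$ (for a polynomial this means no zeroes on $\overline{\mathbb D}^d$). Hence $1/P_n$ and $P_j^*/P_j$ are bounded analytic functions. The proof has two steps: normalization, then orthogonality for $m<n$.

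\emph{Normalization.} The product $\prod_{j<n}P_j^*/P_j$ is inner, so it has modulus $1$ almost everywhere on $\partial^*\mathbb D^d$. Therefore
$$\|e_n\| = \nu(P_n)\,\|P_n^{-1}\| = 1$$
by the definition $\nu(P)=\|P^{-1}\|^{-1}$.

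\emph{Orthogonality.} Take $m<n$. Multiplication by the inner function $\prod_{j<m}P_j^*/P_j$ is an isometry of $L^2(\partial^*\mathbb D^d)$, so
$$\langle e_n,e_m\rangle = \nu(P_n)\nu(P_m)\Bigl\langle \frac{P_m^*}{P_m}\,\varphi,\ \frac{1}{P_m}\Bigr\rangle, \qquad \varphi=\frac{1}{P_n}\prod_{m<j<n}\frac{P_j^*}{P_j}\in\hardy{d}.$$
Thus $e_n\in\frac{P_m^*}{P_m}\hardy{d}$ after factoring out the common inner factor. Apply Lemma~\ref{MK1} with $P=P_m$ and $f=1$. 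It reduces the claim to $\widehat{1}(j)=0$ for every $j\succeq\dgr{}P_m$. This holds because $P_m$ is non constant, so $\dgr{}P_m\neq 0$, and every such $j$ is nonzero. Hence $1/P_m\perp\frac{P_m^*}{P_m}\hardy{d}$, and $\langle e_n,e_m\rangle=0$.

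The only point needing care is the reduction in the orthogonality step: the isometry must be applied with the inner factor common to both $e_n$ and $e_m$, namely $\prod_{j<m}$, and $\varphi$ must really lie in $\hardy{d}$. The latter follows from the boundedness noted at the start. After that, Lemma~\ref{MK1} with $f=1$ finishes the proof.
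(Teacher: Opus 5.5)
Your proof is correct and is essentially the argument the paper leaves implicit when it states this as a corollary of Lemma~\ref{MK1}: taking $f=1$ gives $1/P_m\perp\frac{P_m^*}{P_m}\hardy{d}$ because $\dgr{}P_m\neq 0$, the common unimodular factor $\prod_{j<m}P_j^*/P_j$ drops out of $\langle e_n,e_m\rangle$, and $\nu(P_n)$ normalizes. Your parenthetical claim that a polynomial with no zeroes on $\mathbb D^d\cup\partial^*\mathbb D^d$ has none on $\overline{\mathbb D}^d$ is true, and it is what makes $1/P_n$ bounded; for $d\ge 2$ it is not automatic and needs a short Hurwitz argument: a zero with some coordinates unimodular and the rest interior can be pushed into $\mathbb D^d$, unless the corresponding slice vanishes identically, in which case there is a zero on $\partial^*\mathbb D^d$.
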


This is reminiscent of the
Malmquist-Takenaka~\cite{malmquist,takenaka} bases in one
dimension. The main difference is that, when $d>1$, this cannot be a
basis, even in case the infinite Blaschke product diverges. Indeed it
results from Lemma~1 that all the spaces
$B_n\hardy{d}\ominus B_{n+1}\hardy{d}$ are not finite dimensional (we
set $B_n = \prod_{1\le j<n} P_j^*/P_j$).

In particular when the degree of~$P$ is $(1,1,\dots,1)$ the space
$\hardy2\ominus B\hardy2$ consists of the functions of the form
$\bigl(\sum_{j=1}^d f_j(z_j)\bigr)/P(z)$.

\section{Orthogonal projections}

In this section we consider orthogonal projections on $B\hardy{d}$,
where $B=P^*/P$.

\subsection{Tensor products of Moebius functions}
\hspace*{1em}\medskip

Lemma~\ref{MK1}, when $d=1$, reduces to Malmquist-Takenaka's theorem:
if $B_a(z_1) = \frac{z_1-a}{1-\cnj{a}z_1}$ the space
  $\hardy1\ominus B_a\hardy1$ is unidimensional and
$\frac{\sqrt{1-|a|^2}}{1-\cnj{a}z_1}$ is an orthonormal basis of
it. Therefore a simple calculation shows that the
orthogonal
    projection of~$f$ on this space is
$$\frac{1-|a|^2}{1-\cnj{a}z_1}\,f(a),$$

\subsubsection{Tensor product of two M\"obius functions}
\hspace*{1em}\medskip

Now
$B(z_1,z_2) =
\frac{z_1-a_1}{1-\cnj{a_1}z_1}\frac{z_2-a_2}{1-\cnj{a_2}z_2}$.  The
projection of $f$ on
$\hardy2 \ominus \frac{z_1-a_1}{1-\cnj{a_1}z_1)}\hardy2$ is (see above)
$$\frac{1-|a_1|^2}{1-\cnj{a_1}z_1}\,f(a_1,z_2).$$
So the projection of~$f$ on $B\hardy2$ is 
$$ \frac{1-|a_2|^2}{1-\cnj{a_2}z_2}\,\left(f(z_1,a_2)-\frac{1-|a_1|^2}{1-\cnj{a_1}z_1}\,f(a_1,a_2)\right)
$$
and the projection on $\hardy2\ominus B\hardy2$ is
$$
\frac{1-|a_2|^2}{1-\cnj{a_2}z_2}\,f(z_1,a_2)+
\frac{(1-|a_1|)^2(1-|a_2|)^2}{(1-\cnj{a_1}z_1)(1-\cnj{a_1}z_1)}\, f(a_1,a_2)
+\frac{1-|a_1|^2}{1-\cnj{a_1}z_1}\,f(a_1,z_2).
$$

\subsubsection{Tensor product of more M\"obius functions}
\hspace*{1em}\medskip

By iterating the previous construction we obtain the following
expression of the orthogonal projection of
$f\in \hardy{d}$ on the orthogonal complement of $\displaystyle
\left(\prod_{j=1}^d \frac{a_j-z_j}{1-\cnj{a_j}z_j}\right)\hardy{d}$:
  
$$\sum_{k=1}^d (-1)^{k-1} \sum^* \left(\prod_{j=1}^k\frac{1-|a_j|^2}{1-\cnj{a_j}z_j}\right)\, f(a_1,\dots,a_{k},z_{k+1},\dots,z_d),
$$
where the symbol~$\sum^*$ means that one has to add analogous terms
obtained by permutations.

\subsection{Kernel for orthogonal projection}

When a direct computation, as above, is not obvious one can use a kernel.
The orthogonal projection on $\hardy{d}$ is given by the Szeg\"o kernel
$$
S_z(z,\zeta) = \prod_{j=1}^d 
\frac{1}{2\pi(1-z_j\overline{\zeta}_j)}.$$

If~$B$ is an inner function on the polydisk the orthogonal projection on
$B\hardy{d}$ is given by the kernel obtained by conjugating the Szeg\"o
operator by multiplication by~$B$:
$$ B(z)S_z(z,\zeta)\overline{B(\zeta)}.
$$
So the kernel for the orthogonal projection o $\hardy{d} \ominus B\hardy{d}$ is
$$
K(z,\zeta) = \frac{1-B(z)\overline{B(\zeta)}}{\prod_{j=1}^d 2\pi(1-z_j\overline{\zeta_j})}.
$$
In other terms
$$
\proj{f,(B\hardy{d}}^\perp)(z) = \int_{{\mathbb T}^d}
K(z,\e^{\mi\theta})\,f(\e^{\mi\theta})\,\dif\theta.
$$

The use of this kernel may lead to tedious computation, even in simple
simple situation as shown by the next example, done with the help of
the Maple software.

\subsubsection{An example}\hspace*{1em}

This time
$P(z_1,z_2) =
1-\overline{a}z_1-\overline{b}z_2+\overline{c}z_1z_2.$. We first consider
the function $g(z_1,z_2)= 1/((1-t_1z_1)(1-t_2z_2))$ because
$\proj{g,(B\hardy2)^\perp}$ is the generating function of
$\proj{z_1^{n_1}z_2^{n_2}, (B\hardy2)^\perp}$.

As the result of a residue calculation we get
{\tiny
\begin{multline*}
  \proj{g,(B\hardy2)^\perp} =\\
  -\frac{\overline{c} b z_1^2+z_1 {| a |}^2-z_1 {| b |}^2-z_1 {| c |}^2-\overline{a} z_1^2+\overline{b} c -a +z_1}{\left(t_1 z_1-1\right) \left(b t_2 z_1-c t_2+a -z_1\right) \left(-z_1 w \overline{c}+z_1 \overline{a}+w \overline{b}-1\right)}\\
  +\frac{\left(a w +b z_1-w z_1-c \right) \left(-\overline{a} c t_2^2+t_2 {| a |}^2-t_2 {| b |}^2+t_2 {| c |}^2+b t_2^2-\overline{c} a +\overline{b}-t_2\right)}{\left(w t_2-1\right) \left(-c t_1 t_2+a t_1+b t_2-1\right) \left(-z_1 w \overline{c}+z_1 \overline{a}+w \overline{b}-1\right) \left(b t_2 z_1-c t_2+a -z_1\right)}
\end{multline*}}
\bigskip

As expected, due to Lemma~\ref{MK1}, we have
\begin{equation*}
  P\times\proj{g,(B\hardy2)^\perp} = \varphi_1(z_1)+\varphi_2(z_2).
\end{equation*}
where

\begin{multline*}
  \left(bt_2 z_1-ct_2+a -z_1\right) \left(t_1 z_1-1\right) \left(-ct_1 t_2+at_1+bt_2-1\right)\varphi_1(z_1) =\\
\left(bt_2-1\right) \left(-\overline{a} ct_1 t_2+| a |^2 t_1-| b |^2 t_1+bt_1 t_2+\overline{c} b -\overline{a}\right) z_1^2+\\
\left(\overline{a} c^2 t_1 t_2^2+\overline{a} bc t_2^2-2 | a |^2 ct_1 t_2+2 | b |^2 ct_1 t_2-bc t_1 t_2^2\right.\\
  -2 | c |^2 bt_2+| a |^2 at_1-| b |^2 at_1-b^2 t_2^2\\+\left.\overline{c} ab -\overline{b} ct_1+| c |^2-| a |^2+at_1+2 bt_2-1\right) z_1\\
-\overline{a} c^2 t_2^2-\overline{b} c^2 t_1 t_2+\overline{b} ac t_1+| c |^2 ct_2+| a |^2 ct_2+ac t_1 t_2+bc t_2^2\\-a | c |^2-a^2 t_1-ab t_2-ct_2+a
\end{multline*}

\begin{equation*}
\varphi_2(z_2) =
-\frac{w \left(-\overline{a} ct_2^2+t_2 | c |^2+t_2 | a |^2-t_2 | b |^2+bt_2^2-\overline{c} a +\overline{b}-t_2\right)}{\left(wt_2-1\right) \left(-ct_1 t_2+at_1+bt_2-1\right)}
\end{equation*}

\section{Unwindings}
\subsection{A first expansion}\hspace*{1em}
\medskip

Given a sequence of polynomials $P_n$ such that $\sum (1-\alpha(P_n)) = \infty$, set $B_n = P_n^*/P_n$. Let $f$ be  a function in $\hardy{d}$. We define by recursion two sequences:\\
$f_0=f$, and,  for $n\ge 0$,\\
$ g_n=\proj{f_n,(B_{n+1}\hardy{d})^\perp}, \text{ and }
f_{n+1}=(f_n-g_n)/B_{n+1}$.

We then have
$$
f = g_o+B_1g_1+B_1B_2g_2+\dots+B_1B_2\cdots  B_n g_n +B_1B_2\cdots B_{n+1}f_{n+1}.
$$
This is a sum of orthogonal terms. Because the product $\prod B_n$
diverges, we get the following orthogomal expansion, converging in
$\hardy{}$~:
$$ f = \sum_{n\ge 0} g_n\prod_{1\le j\le n} B_j.
$$

\subsection{Adaptative unwinding}\hspace*{1em}
\medskip

Consider a compact subset~$\mathscr K$ of~${\mathscr P}$. Given
$f\in \hardy{d}$, let us consider the following procedure,\bigskip

\noindent Set $n=0$, $f_0=f$, and $B_0=1$,\\
\textbf{repeat}\\
\spa\{\\
\noindent\spa
$P_n := \text{argmin} \left\{ \left\|\proj{f_n,P^*P^{-1}\hardy{d}}\right\|\ : P\in {\mathscr K}\right\}$,\\
\spa $U_n = \proj{f_n, (P_n^*P_n^{-1}\hardy{d})^\perp}$\\
\spa $f_{n+1} = f_n-U_n$\\
\spa $B_{n+1} = B_n P_n^*/P_n$\\
\spa increment $n$ by~1.\\
\spa\}.
\bigskip

In this way we get a series $\displaystyle \sum_{n\ge 0}
U_nB_n$ of mutually orthogonal terms. Since all polynomials are chosen
in a compact set, the product
$B_n$ is divergent, which implies $\displaystyle \bigcap B_n\hardy{d} =
\{0\}$. It results that
$$ f = \sum_{n\ge 0} U_nB_n.$$

\subsection{A less greedy  unwinding}\hspace*{1em}

Consider a compact subset~$\mathscr K$ of~${\mathscr P}_d$. Given
$f\in \hardy{d}$, let us consider the following procedure,\medskip

\noindent Set $n=0$ and $B_0 =1$\\
\textbf{repeat}\\
\spa $\displaystyle P_n = \text{argmax}\left\{ \left|\left \langle f,\frac{\nu(P)}{P}B_n\right\rangle\right|\ :  P\in{\mathscr K}\right\}$\\
\spa $\displaystyle a_n = \left\langle f,\frac{\nu(P_n)}{P_n}B_n\right\rangle$\\
\spa $\displaystyle B_{n+1} = B_n\frac{P_n^*}{P_n}$\\
\spa increment $n$ by~1.\bigskip

In this way we get a series $S = \displaystyle \sum_{n\ge 0} a_n
\frac{B_n}{P_n}$ which converges in $\hardy{d}$.
If ${\mathscr K}$ contains only one
polynomial~$P$, there is no chance that this orthogonal series
represents~$f$ ---contrary to what happens in 1D when $\dgr{} P =
1$ --- because the spaces $B_n\hardy{d}\ominus
B_{n+1}\hardy{d}$ are not one dimensional. But, if ${\mathscr
  K}$ is fat enough we could expect that this greedy algorithm
captures most of the energy of~$f$.
\bigskip

\subsection*{Last remark} In the unit disk~${\mathbb D}$ the unwinding
procedure is more accurate.  Indeed, at each step, all the zeroes are
removed. This is possible by the inner/outer factorization theorem and
its efficient implementation by~\cite{Weiss-Weiss}.

\end{document}